
\documentclass{amsart}
\usepackage{amsmath}
\usepackage{amsfonts}
\usepackage{amssymb}
\usepackage{graphicx}

\textheight=22.5cm \textwidth=15cm \hoffset=-1cm \voffset=-1cm

 \newtheorem{theorem}{Theorem}[section]
 
 \newtheorem{lemma}[theorem]{Lemma}
 \newtheorem{proposition}[theorem]{Proposition}
 \theoremstyle{definition}
 \newtheorem{definition}[theorem]{Definition}
 \theoremstyle{remark}

 \numberwithin{equation}{section}

 \newcommand{\f}{\frac}

\begin{document}

\title[$n$-th Relative Nilpotency Degree and Relative
$n$-Isoclinism Classes]
 {$n$-th Relative Nilpotency Degree\\ and Relative
$n$-Isoclinism Classes}
\author[R. Rezaei]{Rashid Rezaei}

\address{Department of Mathematics, Faculty of
Mathematical Sciences\\
University of Malayer\\
Post Box: 657719, 95863, Malayer, Iran}
\email{ras$\_$rezaei@yahoo.com}

\author[F.G. Russo]{Francesco G. Russo}
\address{Departement of Mathematics\\
University of Palermo\\
via Archirafi 14, 90123,  Palermo, Italy} \email{francescog.russo@yahoo.com}


\date{\today}
\subjclass[2010]{Primary: 20D60, 20P05; Secondary:  20D08, 20D15.}
\keywords{\em Nilpotency degree, relative $n$-th nilpotency degree, relative $n$-isoclinism classes}




\begin{abstract}
P. Hall introduced the notion of isoclinism between two groups more
than 60 years ago. Successively, many authors have extended such a
notion in different contexts. The present paper deals with the
notion of relative $n$-isoclinism, given by N. S. Hekster in 1986,
and with the notion of $n$-th relative nilpotency degree, recently
introduced in literature.
\end{abstract}

\maketitle

\section{Introduction and Statement of Results}
Every locally compact topological group $G$ admits a left Haar measure $\mu_G$, which is a positive Radon
measure on a $\sigma$-algebra containing Borel sets with the property that $\mu_G(xE)=\mu_G(E)$ for each element
$x$ of the measure space $G$ (see \cite[Theorem 2.8]{hm}). The support of $\mu_G$ is $G$ and it is usually
unbounded, but if $G$ is compact, then $\mu_G$ is bounded. For this reason we may assume without ambiguity that
a compact group $G$ has a unique probability measure space, where $\mu_G$ is normalized, that is, $\mu_G(G)=1$.

Let $G$ be a compact group with the normalized Haar measure $\mu_G$. On the product measure space $G \times G$,
it is possible to consider the product measure $\mu_G\times \mu_G$. If $$C_2=\{(x,y)\in G \times G  \ | \
[x,y]=1\},$$ then $C_2=f^{-1}(1)$, where $f:G \times G \rightarrow G$ is defined via $f(x,y)=[x,y]$. It is clear
that $f$ is continuous and so $C_2$ is a compact and measurable subset of $G \times G $. Therefore it is
possible to define the $commutativity$ $degree$ of $G$ as \[d(G)=(\mu_G\times \mu_G)(C_2).\]

We may extend $d(G)$ as follows. Suppose that $n\geq1$ is an
integer, $G^n$ is the product of $n$-copies of $G$  and $\mu^n_G$
that of $n$-copies of $\mu_G$. The $n$-$th$ $nilpotency$ $degree$ of
$G$ is defined to be
\[d^{(n)}(G)=\mu^{n+1}_G(C_{n+1}),\] where
\[C_{n+1}=\{(x_1,\ldots,x_{n+1})\in G^{n+1} \ | \ [x_1,x_2,...,x_{n+1}]=1
  \}.\]

Obviously, if $G$ is finite, then $G$ is a compact group with the
discrete topology and so the Haar measure of $G$ is the counting
measure. Then we have as special situation for a finite group $G$:
\[d^{(n)}(G)=\mu^{n+1}_G(C_{n+1})= \frac{|C_{n+1}|}{|G|^{n+1}}.\]
When $n=1$, literature can be found in \cite{elr, er, gal, gr, gus,
l2}.

Let $G$ be a compact group and $H$ be a closed subgroup of $G$. It
is possible to define \[D_2=\{(h,g)\in H \times G  \ | \
[h,g]=1\}.\] Then $D_2=\phi^{-1}(1)$, where $\phi:H \times G
\rightarrow G$ is defined via $\phi(h,g)=[h,g]$. It is clear that
$\phi$ is continuous and so $D_2$ is a compact and measurable subset
of $H \times G $. Note that $\phi$ is the restriction of $f$ under
$H\times G$. This remark shows why $H$ has to be required as closed
subgroup of $G$. Then we may define the $relative$ $commutativity$
$degree$ of $H$ with respect to $G$ as \[d(H,G)= (\mu_H \times
\mu_G) (D_2).\] Considering \[D_{n+1}=\{(h_1,, ..., h_n, g)\in H^n
\times G  \ | \ [h_1,h_2,...,h_n,g]=1\},\] we define  the $relative$
$n$-$th$ $nilpotency$ $degree$ of $H$ with respect to $G$ as
\[d^{(n)}(H,G)= (\mu^n_H \times \mu_G) (D_{n+1}).\]

The following notion is fundamental for stating our results (see for terminology  \cite{ b1, b2, er, hs,
 h1, hek, hm, rob}).

 \begin{definition} \label{d:1}
 Let $G_1$, $G_2$ be two groups,
$H_1$ a subgroup of $G_1$ and $H_2$  a subgroup of $G_2$. A pair
$(\alpha,\beta)$ is said
to be a relative $n$-isoclinism from $(H_1,G_1)$ to $(H_2,G_2)$ if we
 have the following conditions:\begin{enumerate}
\item[(i)]$\alpha$ is an isomorphism from $G_1/Z_n(G_1)$ to $G_2/Z_n(G_2)$ such that the restriction of $\alpha$ under
$H_1/(Z_n(G_1)\cap H_1)$ is an isomorphism from $H_1/(Z_n(G_1)\cap H_1)$ to $H_2/(Z_n(G_2)\cap H_2)$, that is,
the map $\alpha^{n+1} : \frac{H_1}{Z_n(G_1)\cap H_1}\times ...\times\frac{H_1}{Z_n(G_1)\cap H_1}\times \frac
{G_1}{Z_n(G_1)} \rightarrow \frac
 {H_2}{Z_n(G_2)\cap H_2}\times...\times\frac {H_2}{Z_n(G_2)\cap H_2}\times \frac
 {G_2}{Z_n(G_2)}$ is an isomorphism;
\item[(ii)]$\beta$ is an isomorphism from $[_nH_1,G_1]$ to $[_nH_2,G_2]$;\\
\item[(iii)]Considering  for each $h_1,...,h_n \in H_1$, $k_1,...,k_n \in H_2$, $g_1 \in
G_1$, $g_2\in G_2$ there exists a commutative diagram in which the map \[\gamma (n,H_1,G_1) : ((h_1(Z_n(G_1)\cap
H_1),...,h_n(Z_n(G_1)\cap
 H_1),g_1Z_n(G_1)))\in \frac{H_1}{Z_n(G_1)\cap H_1}\times...\]\[ ...\times\frac{H_1}{Z_n(G_1)\cap H_1}\times \frac {G_1}{Z_n(G_1)} \mapsto [h_1,
..., h_n,g_1] \in [_nH_1,G_1]\] and the map \[\gamma (n,H_2,G_2) : ((k_1(Z_n(G_2)\cap H_2),...,k_n(Z_n(G_2)\cap
H_2),g_2Z_n(G_2)))\in \frac
 {H_2}{Z_n(G_2)\cap H_2}\times...\]\[...\times\frac {H_2}{Z_n(G_2)\cap H_2}\times \frac
 {G_2}{Z_n(G_2)} \mapsto [k_1, ..., k_n,g_2]\in [_nH_2,G_2],\] can be composed by the rule
\[\gamma (n,H_2,G_2) \ \circ \  \alpha^{n+1}= \beta \ \circ \ \gamma (n,H_1,G_1).\]
\end{enumerate}
\end{definition}

It is easy to see that Definition \ref{d:1} is well posed. In
particular, two groups $G_1$ and $G_2$ in Definition \ref{d:1} are
called $isoclinic$ if $n=1$, $H_1=G_1$ and $H_2=G_2$.

P. Hall already pointed out that two groups which are isoclinic can
allow us to define the $isoclinism$ as an equivalence relation
\cite{h1}. Then it is possible to classify two groups with respect
to their $isoclinism$ $class$. See \cite{hs} as general reference on
the topic. Successively, this crucial passage was pointed out by J.
Bioch in \cite{b1,b2} and by N. S. Hekster in \cite{hek} with
respect to generalizations of the notion of isoclinism. At the same
way, Definition \ref{d:1} allows us to define an equivalence
relation, which extends that of  P. Hall in \cite{h1}, that of J.
Bioch in \cite{b1,b2} and that of N. S. Hekster in \cite{hek}. Two
groups which satisfy Definition \ref{d:1} are called $relative$
$n$-$isoclinic$ and we will write briefly $(H_1,G_1) \
_{\widetilde{n}} \ (H_2,G_2)$. It is easy to see that it is possible
to classify two groups with respect to their $relative$
$n$-$isoclinism$ $class$.

The main results of the present paper are listed below and they generalize \cite[Theorem 4.2]{er}.

\begin{theorem}\label{t:1} Assume that $G_1$ and $G_2$ are two compact groups, $H_1 $ is
a closed subgroup of $G_1$ and $H_2$ is a closed subgroup of $G_2$. If $(H_1,G_1) \ _{\widetilde{n}} \
(H_2,G_2)$, then $d^{(n)}(H_1,G_1) =d^{(n)}(H_2,G_2)$\end{theorem}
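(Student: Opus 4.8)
The plan is to push the whole computation down to the quotient groups on which the relative $n$-isoclinism data lives, and there to transport everything across the isomorphism $\alpha^{n+1}$. Write $N_i=Z_n(G_i)$ and $K_i=Z_n(G_i)\cap H_i$ for $i=1,2$; these are closed normal subgroups of $G_i$ and $H_i$ respectively, so the quotients $G_i/N_i$ and $H_i/K_i$ are again compact groups carrying normalized Haar measures $\mu_{G_i/N_i}$ and $\mu_{H_i/K_i}$. Inside $(H_i/K_i)^n\times(G_i/N_i)$ I consider the set
\[
\overline{D}_{n+1}^{(i)}=\{\,\overline{x}\ :\ \gamma(n,H_i,G_i)(\overline{x})=1\,\},
\]
which is meaningful precisely because the map $\gamma(n,H_i,G_i)$ of Definition \ref{d:1} is well posed; it is closed, hence measurable, being the preimage of $\{1\}$ under the continuous map $\gamma(n,H_i,G_i)$.

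First I would reduce $d^{(n)}(H_i,G_i)$ to a measure computed on the quotient. Let $\pi_i\colon H_i^n\times G_i\to (H_i/K_i)^n\times(G_i/N_i)$ be the product of the canonical projections. Because the iterated commutator $[h_1,\dots,h_n,g]$ depends only on the cosets $h_jK_i$ and $gN_i$ (this is exactly the well-posedness used to define $\gamma(n,H_i,G_i)$), the indicator of $D_{n+1}$ factors as $\mathbf{1}_{D_{n+1}}=\mathbf{1}_{\overline{D}_{n+1}^{(i)}}\circ\pi_i$. Now the canonical projection of a compact group onto a quotient by a closed normal subgroup carries the normalized Haar measure to the normalized Haar measure, by uniqueness of the invariant normalized Radon measure; taking products gives $(\pi_i)_*(\mu^n_{H_i}\times\mu_{G_i})=\mu^n_{H_i/K_i}\times\mu_{G_i/N_i}$. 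Integrating the factorization of the indicator and applying this push-forward identity yields
\[
d^{(n)}(H_i,G_i)=(\mu^n_{H_i}\times\mu_{G_i})(D_{n+1})=(\mu^n_{H_i/K_i}\times\mu_{G_i/N_i})\big(\overline{D}_{n+1}^{(i)}\big).
\]

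It then remains to compare the two quotient computations through $\alpha^{n+1}$. Since $\alpha^{n+1}$ is an isomorphism of compact groups, and a continuous bijection from a compact space to a Hausdorff one is a homeomorphism, $\alpha^{n+1}$ is a topological isomorphism from $(H_1/K_1)^n\times(G_1/N_1)$ onto $(H_2/K_2)^n\times(G_2/N_2)$; consequently it sends the unique normalized invariant measure to the unique normalized invariant measure, that is, $(\alpha^{n+1})_*(\mu^n_{H_1/K_1}\times\mu_{G_1/N_1})=\mu^n_{H_2/K_2}\times\mu_{G_2/N_2}$. Moreover condition (iii) gives $\gamma(n,H_2,G_2)\circ\alpha^{n+1}=\beta\circ\gamma(n,H_1,G_1)$, and $\beta$ is injective, so for every $\overline{x}$ one has $\gamma(n,H_1,G_1)(\overline{x})=1$ if and only if $\gamma(n,H_2,G_2)(\alpha^{n+1}(\overline{x}))=1$. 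Hence $\alpha^{n+1}$ restricts to a bijection $\overline{D}_{n+1}^{(1)}\to\overline{D}_{n+1}^{(2)}$. Combining this with the measure-preservation of $\alpha^{n+1}$ and the reduction of the previous step gives
\[
d^{(n)}(H_1,G_1)=(\mu^n_{H_1/K_1}\times\mu_{G_1/N_1})\big(\overline{D}_{n+1}^{(1)}\big)=(\mu^n_{H_2/K_2}\times\mu_{G_2/N_2})\big(\overline{D}_{n+1}^{(2)}\big)=d^{(n)}(H_2,G_2),
\]
as required.

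The only genuinely delicate point is the measure-theoretic reduction in the second step: one must justify that the canonical projection of a compact group pushes the normalized Haar measure forward to the normalized Haar measure, and that this is compatible with the product structure on $H_i^n\times G_i$. I would handle this through the uniqueness of the normalized left-invariant Radon measure (the push-forward is again normalized, invariant and Radon) together with Fubini's theorem for the product measures. By contrast, the transport across $\alpha^{n+1}$ is essentially formal once its continuity, and hence its measure-preservation, has been recorded.
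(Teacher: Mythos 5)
Your proposal is correct and follows essentially the same route as the paper: both arguments descend $d^{(n)}(H_i,G_i)$ to a measure on the quotients modulo $Z_n$, then transport it across $\alpha^{n+1}$ using uniqueness of the normalized Haar measure (the paper's identity $(*)$) together with condition (iii) and the injectivity of $\beta$ to match the two solution sets. The only difference is presentational: the paper performs the descent by explicit iterated (Weyl-type) quotient integration for $n=1$ and invokes repetition for $n>1$, whereas you package the same facts as pushforward identities and handle all $n$ uniformly.
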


\begin{theorem}\label{t:2}  Let $G$ be a compact group and $H$ be a
closed subgroup of $G$. If $G=HZ_n(G)$, then $d^{(n)}(H)=d^{(n)}(H,G)=d^{(n)}(G).$ \end{theorem}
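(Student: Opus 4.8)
The plan is to reduce everything to Theorem \ref{t:1} by exhibiting two relative $n$-isoclinisms. First observe that, straight from the definitions, the diagonal pairs recover the absolute degrees: the set $D_{n+1}$ attached to a pair $(K,K)$ is exactly the set $C_{n+1}$ for $K$, and $\mu_K^{n}\times\mu_K=\mu_K^{n+1}$, so
\[ d^{(n)}(H)=d^{(n)}(H,H), \qquad d^{(n)}(G)=d^{(n)}(G,G). \]
Hence it suffices to establish the two relative $n$-isoclinisms $(H,H)\ _{\widetilde n}\ (H,G)$ and $(H,G)\ _{\widetilde n}\ (G,G)$; applying Theorem \ref{t:1} to each then gives $d^{(n)}(H,H)=d^{(n)}(H,G)=d^{(n)}(G,G)$, which is the assertion.

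Next I would record the structural consequences of the hypothesis $G=HZ_n(G)$. The well-posedness of Definition \ref{d:1} rests on the fact that a left-normed commutator of length $n+1$ depends only on its arguments modulo $Z_n(G)$: if $x_i\equiv y_i \pmod{Z_n(G)}$ for $1\le i\le n+1$, then $[x_1,\dots,x_{n+1}]=[y_1,\dots,y_{n+1}]$. This follows from the standard inclusions $[Z_n(G),\underbrace{G,\dots,G}_{k}]\le Z_{n-k}(G)$ (with $Z_j(G)=1$ for $j\le 0$), so that each correction term produced by replacing an argument by a congruent one is a commutator containing an element of $Z_n(G)$ together with $n$ further group entries, and hence lies in $[Z_n(G),\underbrace{G,\dots,G}_{n}]=1$. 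Using this invariance with a central first argument, together with $G=HZ_n(G)$, I would prove $Z_n(H)=Z_n(G)\cap H$: the inclusion $Z_n(G)\cap H\le Z_n(H)$ is immediate from the characterisation of the upper central series, and conversely any $z\in Z_n(H)$ satisfies $[z,g_1,\dots,g_n]=[z,h_1,\dots,h_n]=1$ for all $g_i\in G$, after replacing each $g_i$ by an $H$-part congruent to it modulo $Z_n(G)$. The second isomorphism theorem then yields $H/Z_n(H)=H/(Z_n(G)\cap H)\cong HZ_n(G)/Z_n(G)=G/Z_n(G)$. Finally, the same invariance applied to the last argument gives $[{}_nH,H]=[{}_nH,G]=[{}_nG,G]$, since any generator $[h_1,\dots,h_n,g]$ with $g=hz$, $h\in H$, $z\in Z_n(G)$, equals $[h_1,\dots,h_n,h]$.

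With these identifications the two isoclinisms are obtained by transport along the inclusion $H\hookrightarrow G$. For $(H,H)\ _{\widetilde n}\ (H,G)$ I take $\alpha$ to be the isomorphism $H/Z_n(H)\cong G/Z_n(G)$ above, whose restriction to $H/(Z_n(H)\cap H)=H/Z_n(H)$ is the identity onto $H/(Z_n(G)\cap H)$ by $Z_n(H)=Z_n(G)\cap H$, and $\beta=\mathrm{id}$ on $[{}_nH,H]=[{}_nH,G]$. For $(H,G)\ _{\widetilde n}\ (G,G)$ I take $\alpha$ to be the identity of $G/Z_n(G)$, so that $\alpha^{n+1}$ acts on the first $n$ factors by the inclusion-induced isomorphism $H/(Z_n(G)\cap H)\cong G/Z_n(G)$ and by the identity on the last factor, with $\beta=\mathrm{id}$ on $[{}_nH,G]=[{}_nG,G]$. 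In both cases condition (iii) of Definition \ref{d:1} holds because, choosing representatives in $H$ for the first $n$ coordinates, both composites send a tuple of cosets to the single element $[h_1,\dots,h_n,g]$, so that $\gamma(n,H,G)\circ\alpha^{n+1}=\beta\circ\gamma(n,H,H)$ and $\gamma(n,G,G)\circ\alpha^{n+1}=\beta\circ\gamma(n,H,G)$ hold directly; the commutator invariance of the previous paragraph is precisely what guarantees that the maps $\gamma(n,\cdot,\cdot)$ are well defined, so that this computation is legitimate.

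The main obstacle is the commutator-invariance statement underlying the entire argument: it is the content hidden behind the remark that Definition \ref{d:1} is well posed, and every subsequent identification, namely $Z_n(H)=Z_n(G)\cap H$, the equalities $[{}_nH,H]=[{}_nH,G]=[{}_nG,G]$, and the commutativity of the diagrams in (iii), is a corollary of it. Once this invariance and the inclusions $[Z_n(G),\underbrace{G,\dots,G}_{k}]\le Z_{n-k}(G)$ are secured, the verifications that the pairs $(\alpha,\beta)$ satisfy Definition \ref{d:1} are routine, and all the measure-theoretic content is absorbed into Theorem \ref{t:1}.
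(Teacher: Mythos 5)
Your proposal is correct and follows essentially the same route as the paper: the paper's own proof consists of Lemma \ref{l:3}, which establishes exactly the chain $(H,H) \ _{\widetilde{n}} \ (H,G) \ _{\widetilde{n}} \ (G,G)$ from $Z_n(H)=Z_n(G)\cap H$, the inclusion-induced isomorphism $H/Z_n(H)\cong G/Z_n(G)$, and $[_nH,G]=[_nH,HZ_n(G)]=\gamma_{n+1}(H)$ with $\beta$ the identity, followed by an application of Theorem \ref{t:1}. The only differences are cosmetic: you verify Definition \ref{d:1} directly and spell out the supporting facts (commutator invariance modulo $Z_n(G)$, the reduction $d^{(n)}(K)=d^{(n)}(K,K)$) that the paper leaves implicit, whereas the paper routes the verification through the characterization in Lemma \ref{l:2}.
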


\section{Proof of Main Theorems and Some Consequences}
This section is devoted to describe our main results.

\begin{lemma}\label{l:1}Assume that $G$ is a compact group, $H$ is a closed subgroup of
$G$ and $C_G([h_1,...,h_n])$ is the centralizer of the commutator
$[h_1, ...,h_n]$ in $G$ for some elements $h_1, ..., h_n$ in $H$.
Then $$ d^{(n)}(H,G)=\int_H \ldots \left( \int_H \mu_G (C_G([h_1,
..., h_n]))d\mu_H (h_1)\right) \ldots d\mu_H(h_n),$$ where
$$\mu_G(C_G([h_1,...,h_n]))=\int_G\chi_{_{D_{n+1}}}(h_1,..., h_n,g)
d\mu_G(g),$$ and $\chi_{_{D_{n+1}}}$ denotes the characteristic map of the set $D_{n+1}$.\end{lemma}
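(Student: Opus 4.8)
The plan is to prove this lemma by unwinding the definition of the relative $n$-th nilpotency degree via Fubini's theorem on the product measure space.

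The plan is to rewrite $d^{(n)}(H,G)$ as an integral of the characteristic function $\chi_{D_{n+1}}$ over the product space and then iterate it by Fubini's theorem. First I would recall that, by definition, $d^{(n)}(H,G)=(\mu^n_H\times\mu_G)(D_{n+1})=\int_{H^n\times G}\chi_{D_{n+1}}\, d(\mu^n_H\times\mu_G)$, which is legitimate because $D_{n+1}$ was shown above to be a compact, hence measurable, subset of $H^n\times G$.

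The key step is to identify the inner $G$-fiber. Since $[h_1,\ldots,h_n,g]=[[h_1,\ldots,h_n],g]$, this commutator equals $1$ precisely when $g$ commutes with $[h_1,\ldots,h_n]$, i.e. when $g\in C_G([h_1,\ldots,h_n])$. Therefore, for fixed $h_1,\ldots,h_n\in H$, the slice $\{g\in G:(h_1,\ldots,h_n,g)\in D_{n+1}\}$ equals $C_G([h_1,\ldots,h_n])$, and integrating over $g$ gives $\int_G\chi_{D_{n+1}}(h_1,\ldots,h_n,g)\, d\mu_G(g)=\mu_G(C_G([h_1,\ldots,h_n]))$, which is the second displayed formula in the statement.

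Finally I would apply Fubini's theorem to peel off the integrations over $H$ one at a time. Because all the measures are finite (normalized Haar measures on compact groups) and $\chi_{D_{n+1}}$ is nonnegative and bounded by $1$, the hypotheses of Fubini's theorem are satisfied, and the iterated integral may be computed in any order; iterating in the order $g,h_1,\ldots,h_n$ yields exactly the right-hand side of the lemma.

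The main obstacle is the careful justification that Fubini's theorem applies: one must check that $\chi_{D_{n+1}}$ is jointly measurable with respect to the product $\sigma$-algebra and that the fiber map $(h_1,\ldots,h_n)\mapsto\mu_G(C_G([h_1,\ldots,h_n]))$ is itself measurable, so that the outer integrals make sense. Both follow from the continuity of the commutator map and the compactness of $D_{n+1}$, but this measurability bookkeeping is the part that requires the most attention.
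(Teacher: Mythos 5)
Your proposal is correct and follows essentially the same route as the paper's own proof: both express $d^{(n)}(H,G)$ as $\int_{H^n\times G}\chi_{_{D_{n+1}}}\,d(\mu^n_H\times\mu_G)$, apply Fubini--Tonelli to iterate the integral with the $G$-variable innermost, and identify the inner integral with $\mu_G(C_G([h_1,\ldots,h_n]))$. Your additional remarks --- the explicit identification of the $G$-slice via $[h_1,\ldots,h_n,g]=[[h_1,\ldots,h_n],g]$ and the measurability bookkeeping --- only make explicit what the paper leaves implicit.
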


\begin{proof} Since
$$\mu_G(C_G([h_1,...,h_n]))=\int_G \chi_{_{D_{n+1}}}(h_1,...,
 h_n,g)d\mu_G(g)$$ we
have by Fubini-Tonelli's Theorem:
\[\begin{array}{lcl}
d^{(n)}(H,G)&=&(\mu^n_H\times \mu_G)(D_{n+1})= \int_{H^n \times G}
 \chi_{_{D_{n+1}}}(d\mu^n_H\times d\mu_G)\vspace{.3cm}\\
&=&\int_H ... \left( \int_H \left( \int_G
 \chi_{_{D_{n+1}}}(h_1,...,h_n,g) d\mu_G(g) \right) d\mu_H (h_1)\right) ... \ d\mu_H(h_n)\vspace{.3cm}\\
&=&\int_H ... \left( \int_H \mu(C_G([h_1,...,h_n])) d\mu_H(h_1)
 \right)... \ d\mu_H(h_n).\end{array}\]
 \end{proof}

We recall to convenience of the reader \cite[Section 6, Theorem
3]{chow}. Let $(\Omega_1,\mathcal{A}_1)$ and
$(\Omega_2,\mathcal{A}_2)$ be two measurable spaces. The mapping
$X:\Omega_1\rightarrow\Omega_2$ is said to be a $measurable$
 $transformation$ from $(\Omega_1,\mathcal{A}_1)$ to
$(\Omega_2,\mathcal{A}_2)$ if
$X^{-1}(\mathcal{A}_2)\subseteq\mathcal{A}_1.$ Now for a measurable
transformation $X$ from a measure space
$(\Omega_1,\mathcal{A}_1,\mu)$ to a measurable space
$(\Omega_2,\mathcal{A}_2)$ one can check that the measure $\mu$
induces a measure $\nu$ on $\mathcal{A}_2$ via
$\nu\{A\}=\mu\{X^{-1}(A)\},$ for all $A\in\mathcal{A}_2.$ This
induced measure is denoted by $\mu X^{-1}.$ Let $\phi$ be an
isomorphism from a compact group $G_1$ with Haar measure $\mu_{G_1}$
onto a compact group $G_2$ with Haar measure $\mu_{G_2}$. It is
clear that $\phi$ is a measurable transformation from $G_1$ to
$G_2$, then by the uniqueness of the Haar measure on compact groups,
we have that $\mu_{G_2}=\mu_{G_1}\phi^{-1}$ and so
\[(*) \hspace{2cm} \int_{G_2}fd\mu_{G_2}=\int_{G_1}(f\phi)d\mu_{G_1}. \]

\begin{proof}[Proof of Theorem \ref{t:1}] Let $(\alpha,\beta)$ be a
relative $n$-isoclinism from $(H_1,G_1)$ to $(H_2,G_2)$ as in
Definition \ref{d:1}. Assume that $n=1$. Let $\mu_{H_1}$ be a
normalized Haar measure on $H_1$, $\mu_{H_2}$ on $H_2$, $\mu_{G_1}$
on $G_1$, $\mu_{G_2}$ on $G_2$, $\nu_{H_1}$ on $H_1/(H_1\cap
Z(G_1))$, $\nu_{H_2}$ on $H_2/(H_2\cap Z(G_2))$, $\nu_{G_1}$ on
$G_1/Z(G_1)$ and $\nu_{G_2}$ on $G_2/Z(G_2)$. Note that these
requirements are well posed, because $H_1$ is a closed subgroup of
$G_1$ and $H_2$ is a closed subgroup of $G_2$.

With obvious meaning of symbols, we have
\[\begin{array}{lcl}
d(H_1,G_1)&=&\int_{H_1}\int_{G_1}\chi_{_{D_2}}(h_1,g_1)d\mu_{G_1}(g_1)d\mu_{H_1}(h_1)\vspace{.3cm}\\
&=&\int_{H_1}\left(\int_{\frac{G_1}{Z(G_1)}}\left(\int_{Z(G_1)}\chi_{_{D_2}}(h_1,z_1g_1)d\mu_{Z(G_1)}(z_1)\right)d\nu_{G_1}(\bar{g_1})\right)d\mu_{H_1}(h_1)\vspace{.3cm}\\
&=&\int_{H_1}\left(\int_{\frac{G_1}{Z(G_1)}}\left(\int_{Z(G_1)}\chi_{_{D_2}}(h_1,g_1)d\mu_{Z(G_1)}(z_1)\right)d\nu_{G_1}(\bar{g_1})\right)d\mu_{H_1}(h_1)\vspace{.3cm}\\
&=&\int_{H_1}\left(\int_{\frac{G_1}{Z(G_1)}}\chi_{_{D_2}}(h_1,g_1)\left(\int_{Z(G_1)}d\mu_{Z(G_1)}(z_1)\right)d\nu_{G_1}(\bar{g_1})\right)d\mu_{H_1}(h_1)\vspace{.3cm}\\
&=&\int_{H_1}\left(\int_{\frac{G_1}{Z(G_1)}}\chi_{_{D_2}}(h_1,g_1)d\nu_{G_1}(\bar{g_1})\right)d\mu_{H_1}(h_1)\vspace{.3cm}\\
\end{array}\] now by Fubini-Tonelli's Theorem we
have
\[\begin{array}{lcl}
=\int_{\frac{G_1}{Z(G_1)}}\left(\int_{H_1}\chi_{_{D_2}}(h_1,g_1)d\mu_{H_1}(h_1)\right)d\nu_{G_1}(\bar{g_1})\vspace{.3cm}\\
\end{array}\]
then we may proceed as before:
\[\begin{array}{lcl}
=\int_{\frac{G_1}{Z(G_1)}}\left(\int_{\frac{H_1}{H_1\cap
 Z(G_1)}}\left(\int_{H_1\cap Z(G_1)}\chi_{_{D_2}}(h_1\zeta_1
 ,g_1)d\mu_{Z(G_1)\cap
 H_1}(\zeta_1)\right)d\nu_{H_1}(\bar{h_1})\right)d\nu_{G_1}(\bar{g_1})\vspace{.3cm}\\
=\int_{\frac{G_1}{Z(G_1)}}\left(\int_{\frac{H_1}{H_1\cap
 Z(G_1)}}\left(\int_{H_1\cap Z(G_1)}\chi_{_{D_2}}(h_1 ,g_1)d\mu_{Z(G_1)\cap
 H_1}(\zeta_1)\right)d\nu_{H_1}(\bar{h_1})\right)d\nu_{G_1}(\bar{g_1})\vspace{.3cm}\\
=\int_{\frac{G_1}{Z(G_1)}}\left(\int_{\frac{H_1}{H_1\cap
 Z(G_1)}}\chi_{_{D_2}}(h_1 ,g_1)\left(\int_{H_1\cap Z(G_1)}d\mu_{Z(G_1)\cap
 H_1}(\zeta_1)\right)d\nu_{H_1}(\bar{h_1})\right)d\nu_{G_1}(\bar{g_1})\vspace{.3cm}\\
=\int_{\frac{G_1}{Z(G_1)}}\left(\int_{\frac{H_1}{H_1\cap
 Z(G_1)}}\chi_{_{D_2}}(h_1
 ,g_1)d\nu_{H_1}(\bar{h_1})\right)d\nu_{G_1}(\bar{g_1})\vspace{.3cm}\\
\end{array}\]
now the commutativity of the diagram in Definition \ref{d:1} and the
fact that $\beta$ is an isomorphism of compact groups allow us to
write
\[\begin{array}{lcl}
&=&\int_{\frac{G_1}{Z(G_1)}}\left(\int_{\frac{H_1}{H_1\cap
 Z(G_1)}}\chi_{_{D_2}}(\gamma(1,H_1,G_1)(\bar{h_1}
 ,\bar{g_1}))d\nu_{H_1}(\bar{h_1})\right)d\nu_{G_1}(\bar{g_1})\vspace{.3cm}\\
&=&\int_{\frac{G_1}{Z(G_1)}}\left(\int_{\frac{H_1}{H_1\cap
 Z(G_1)}}\chi_{_{D_2}}(\beta(\gamma(1,H_1,G_1)(\bar{h_1}
 ,\bar{g_1})))d\nu_{H_1}(\bar{h_1})\right)d\nu_{G_1}(\bar{g_1})\vspace{.3cm}\\
 &=&\int_{\frac{G_1}{Z(G_1)}}\left(\int_{\frac{H_1}{H_1\cap
 Z(G_1)}}\chi_{_{D_2}}(\gamma(1,H_2,G_2)(\alpha^2(\bar{h_1}
 ,\bar{g_1})))d\nu_{H_1}(\bar{h_1})\right)d\nu_{G_1}(\bar{g_1})\vspace{.3cm}\\
\end{array}\]
now we may apply $(*)$ above so
\[\begin{array}{lcl}
&=&\int_{\frac{G_2}{Z(G_2)}}\left(\int_{\frac{H_2}{H_2\cap
 Z(G_2)}}\chi_{_{D_2}}(\gamma(1,H_2,G_2)(\bar{k_1}
 ,\bar{g_2}))d\nu_{H_2}(\bar{k_1})\right)d\nu_{G_2}(\bar{g_2})\vspace{.3cm}\\
&=&\int_{\frac{G_2}{Z(G_2)}}\left(\int_{\frac{H_2}{H_2\cap
 Z(G_2)}}\chi_{_{D_2}}(k_1
 ,g_2)d\nu_{H_2}(\bar{k_1})\right)d\nu_{G_2}(\bar{g_2})\vspace{.3cm}\\
\end{array}\]
then we may proceed as before
\[\begin{array}{lcl}
&=&\int_{\frac{G_2}{Z(G_2)}}\left(\int_{\frac{H_2}{H_2\cap
 Z(G_2)}}\chi_{_{D_2}}(k_1 ,g_2)\left(\int_{H_2\cap Z(G_2)}d\mu_{Z(G_2)\cap
 H_2}(\zeta_2)\right)d\nu_{H_2}(\bar{k_1})\right)d\nu_{G_2}(\bar{g_2})\vspace{.3cm}\\
&=&\int_{\frac{G_2}{Z(G_2)}}\left(\int_{\frac{H_2}{H_2\cap
 Z(G_2)}}\left(\int_{H_2\cap Z(G_2)}\chi_{_{D_2}}(k_1
 ,g_2)d\mu_{Z(G_2)\cap
 H_2}(\zeta_2)\right)d\nu_{H_2}(\bar{k_1})\right)d\nu_{G_2}(\bar{g_2})\vspace{.3cm}\\
&=&\int_{\frac{G_2}{Z(G_2)}}\left(\int_{\frac{H_2}{H_2\cap
 Z(G_2)}}\left(\int_{H_2\cap Z(G_2)}\chi_{_{D_2}}(k_1\zeta_2 ,g_2)d\mu_{Z(G_2)\cap
 H_2}(\zeta_2)\right)d\nu_{H_2}(\bar{h_2})\right)d\nu_{G_2}(\bar{g_2})\vspace{.3cm}\\
&=&\int_{\frac{G_2}{Z(G_2)}}\left(\int_{H_2}\chi_{_{D_2}}(k_1,g_2)d\mu_{H_2}(k_1)\right)d\nu_{G_2}(\bar{g_2})\vspace{.3cm}\\
&=&\int_{H_2}\left(\int_{\frac{G_2}{Z(G_2)}}\chi_{_{D_2}}(k_1,g_2)d\nu_{G_2}(\bar{g_2})\right)d\mu_{H_2}(k_1)\vspace{.3cm}\\
\end{array}\]\[\begin{array}{lcl}&=&\int_{H_2}\left(\int_{\frac{G_2}{Z(G_2)}}\left(\int_{Z(G_2)}\chi_{_{D_2}}(k_1,g_2)d\mu_{Z(G_2)}(z_2)\right)d\nu_{G_2}(\bar{g_2})\right)d\mu_{H_2}(k_1)\vspace{.3cm}\\
&=&\int_{H_2}\left(\int_{\frac{G_2}{Z(G_2)}}\left(\int_{Z(G_2)}\chi_{_{D_2}}(k_1,g_2z_2)d\mu_{Z(G_2)}(z_2)\right)d\nu_{G_2}(\bar{g_2})\right)d\mu_{H_2}(k_1)\vspace{.3cm}\\
&=&\int_{H_2}\int_{G_2}\chi_{_{D_2}}(k_1,g_2)d\mu_{G_2}(g_2)d\mu_{H_2}(k_1)\vspace{.3cm}\\
&=&d(H_2,G_2).\vspace{.3cm}\\
\end{array}\]
The result follows in this case.

If we assume that $n>1$, then we may repeat step by step the
previous argument, writing the corresponding integrals for the
($n+1$)-tuple $(h_1,h_2, \ldots, h_n, g_1)$ of $H^n_1\times G_1$ and
for the ($n+1$)-tuple $(k_1,k_2, \ldots, k_n,g_2)$ of $H^n_2\times
G_2$. Therefore the result follows completely.\end{proof}

The proof of Theorem \ref{t:2} is mainly due to the next two lemmas,
where the notations of Definition \ref{d:1} have been adopted. Note
that there is not hypothesis of compactness in the following two
lemmas.

\begin{lemma}\label{l:2}$(H_1,G_1) \ _{\widetilde{n}} \ (H_2,G_2)$ if and only if there
exist normal subgroups $N_1\leq Z_n(G_1)$, $N_2\leq Z_n(G_2)$ and two isomorphisms $\alpha$ and $\beta$ such
that $\alpha:G_1/N_1\rightarrow G_2/N_2,$ $\beta:[_nH_1,G_1]\rightarrow[_nH_2,G_2],$ $\alpha(H_1/(N_1\cap
H_1))=H_2/(N_2\cap H_2)$  for all  $g_1\in G_1$  and $h_i\in H_1,$ $\beta([h_1,...,h_n,g_1])=[k_1,...,k_n,g_2],$
where $g_2\in\alpha(g_1N_1)$, $k_i\in\alpha(h_i(N_1\cap H_1))$ and $1\leq i\leq n$.\end{lemma}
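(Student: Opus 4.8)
The plan is to prove Lemma~\ref{l:2} as a straightforward unpacking of Definition~\ref{d:1}, exhibiting that the abstract relative $n$-isoclinism data is \emph{equivalent} to the more concrete data involving arbitrary normal subgroups $N_1\le Z_n(G_1)$ and $N_2\le Z_n(G_2)$ in place of the full upper central members. I would argue both implications, and the heart of the matter is that the commutator maps $\gamma(n,H_i,G_i)$ are insensitive to which central subgroup one quotients by, so the compatibility condition (iii) of the definition transports verbatim.

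For the forward direction, I would assume $(H_1,G_1)\ _{\widetilde n}\ (H_2,G_2)$ and simply take $N_1=Z_n(G_1)$ and $N_2=Z_n(G_2)$. These are normal, satisfy $N_i\le Z_n(G_i)$ trivially, and the maps $\alpha$ and $\beta$ of Definition~\ref{d:1} give exactly the isomorphisms required, with condition (i) furnishing $\alpha(H_1/(N_1\cap H_1))=H_2/(N_2\cap H_2)$. The relation $\beta([h_1,\ldots,h_n,g_1])=[k_1,\ldots,k_n,g_2]$ with $g_2\in\alpha(g_1N_1)$ and $k_i\in\alpha(h_i(N_1\cap H_1))$ is then read directly off the commutativity of the diagram, $\gamma(n,H_2,G_2)\circ\alpha^{n+1}=\beta\circ\gamma(n,H_1,G_1)$: applying the left-hand side to the tuple of cosets $(h_1(N_1\cap H_1),\ldots,h_n(N_1\cap H_1),g_1N_1)$ sends it to $[k_1,\ldots,k_n,g_2]$, while the right-hand side sends it to $\beta([h_1,\ldots,h_n,g_1])$.

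For the converse, I would suppose the concrete data with general $N_1,N_2$ and need to manufacture genuine relative $n$-isoclinism maps based at the \emph{full} centres $Z_n(G_1),Z_n(G_2)$. The key observation is that any $N_i\le Z_n(G_i)$ acts trivially in the iterated commutator $[h_1,\ldots,h_n,g]$, so the well-definedness of $\gamma(n,H_i,G_i)$ on the smaller quotients $H_i/(N_i\cap H_i)$ and $G_i/N_i$ already forces it to factor through the larger quotients $H_i/(Z_n(G_i)\cap H_i)$ and $G_i/Z_n(G_i)$; the value of the commutator depends only on the cosets modulo the central members. I would therefore let $\alpha$ descend along the canonical projection $G_i/N_i\twoheadrightarrow G_i/Z_n(G_i)$ to an isomorphism $\bar\alpha:G_1/Z_n(G_1)\to G_2/Z_n(G_2)$, check it carries $H_1/(Z_n(G_1)\cap H_1)$ onto $H_2/(Z_n(G_2)\cap H_2)$, and keep $\beta$ unchanged; then condition (iii) follows because evaluating $\gamma(n,H_2,G_2)\circ\bar\alpha^{\,n+1}$ and $\beta\circ\gamma(n,H_1,G_1)$ on any coset tuple reproduces the commutator identity assumed in the hypothesis.

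The main obstacle I anticipate is the converse's descent step: one must verify that $\alpha$ genuinely induces a \emph{well-defined isomorphism} on the full central quotients, i.e.\ that $\alpha$ maps the image of $Z_n(G_1)/N_1$ in $G_1/N_1$ precisely onto the image of $Z_n(G_2)/N_2$ in $G_2/N_2$. This is the only place where $N_i\le Z_n(G_i)$ is used essentially, and it requires showing $\alpha(Z_n(G_1)/N_1)=Z_n(G_2)/N_2$, which is plausibly extracted from the commutator compatibility by noting that an element lies in $Z_n(G)$ iff all its relevant iterated commutators vanish, a condition preserved by $\beta$ and $\alpha$. Once this compatibility of the central filtrations under $\alpha$ is established, the remaining verifications are routine coset bookkeeping.
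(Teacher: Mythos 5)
Your proposal follows the same route as the paper's proof: the forward implication by taking $N_1=Z_n(G_1)$, $N_2=Z_n(G_2)$ and reading the commutator identity off the commuting diagram, and the converse by first establishing $\alpha(Z_n(G_1)/N_1)=Z_n(G_2)/N_2$ and then descending $\alpha$ to the full central quotients (the paper invokes the third isomorphism theorem; your ``descent along the canonical projection'' is the same step). You have also correctly isolated the crux of the converse in that single equality, which the paper itself passes over with the words ``one can show easily''.

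However, your proposed justification of that equality has a genuine gap, and it cannot be repaired in the stated generality. You argue that membership in $Z_n(G)$ is detected by the vanishing of iterated commutators and that this vanishing is transported by $\alpha$ and $\beta$. The detection reads: $x\in Z_n(G)$ if and only if $[x,g_1,\ldots,g_n]=1$ for \emph{all} $g_1,\ldots,g_n\in G$, so $x$ must occupy the first slot and the other entries must range over the whole group. But the hypothesis of Lemma~\ref{l:2} controls only commutators $[h_1,\ldots,h_n,g_1]$ whose first $n$ entries lie in $H_1$: an element $x\in G_1\setminus H_1$ can be placed only in the last slot, and the vanishing of $[h_1,\ldots,h_n,x]$ for all $h_i\in H_1$ expresses a centralizer condition, not membership in $Z_n(G_1)$; even for $x\in H_1$ the remaining entries of the first block are confined to $H_1$. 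Hence the transport argument proves the equality only in the absolute case $H_i=G_i$ (Hall's and Hekster's situation). In fact the equality, and with it the converse of Lemma~\ref{l:2} as literally stated, can fail: take $n=1$, $H_1=H_2=1$, $G_1$ the dihedral group of order $8$, $G_2=C_2\times C_2$, $N_1=Z(G_1)$ (of order $2$) and $N_2=1$. Then $[H_i,G_i]=1$, the compatibility condition is vacuous, and $\alpha$ exists because $G_1/N_1\cong C_2\times C_2\cong G_2/N_2$; yet $G_1/Z(G_1)\cong C_2\times C_2$ while $G_2/Z(G_2)=1$, so no isomorphism as in condition (i) of Definition~\ref{d:1} can exist and the pairs are not relatively $1$-isoclinic. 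To be fair, this defect is equally present in the paper's own proof --- your write-up merely makes visible the step the paper glossed over; closing it would require either restricting to $H_i=G_i$ or replacing $Z_n(G_i)$ in Definition~\ref{d:1} by a suitable relative centre.
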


\begin{proof} Assume that $(H_1,G_1) \ _{\widetilde{n}} \ (H_2,G_2)$. We
may write $N_1=Z_n(G_1)$ and $N_2=Z_n(G_2)$. From this, the result
follows.

Conversely, one can show easily that
$\alpha(Z_n(G_1)/N_1)=Z_n(G_2)/N_2$, therefore by third isomorphism
theorem, $\alpha$ induces the isomorphism $\alpha'$ from
$G_1/Z_n(G_1)$ to $G_2/Z_n(G_2)$ by the rule
$\alpha'(g_1Z_n(G_1))=g_2Z_n(G_2)$, where $g_2\in\alpha(g_1N_1)$.
Furthermore $$\alpha'(h_1(Z_n(G_1)\cap H_1))=h_2(Z_n(G_2)\cap H_2)$$
in which $h_2\in\alpha(h_1(N_1\cap H_1))$. Hence $(\alpha',\beta)$
is a relative $n$-isoclinism from $(H_1,G_1)$ to
$(H_2,G_2)$.\end{proof}

\begin{lemma}\label{l:3} Let $H$ be a subgroup of a group $G=HZ_n(G)$. Then
$$(H,H) \ _{\widetilde{n}} \ (H,G) \ _{\widetilde{n}} \
(G,G).$$\end{lemma}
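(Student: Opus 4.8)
The plan is to verify the two relative $n$-isoclinisms $(H,H) \ _{\widetilde{n}} \ (H,G)$ and $(H,G) \ _{\widetilde{n}} \ (G,G)$ separately, using the reformulation provided by Lemma \ref{l:2} rather than checking Definition \ref{d:1} directly. In each case I only need to exhibit normal subgroups $N_1$ and $N_2$, contained in the appropriate $n$-th centers, together with isomorphisms $\alpha$ and $\beta$ meeting the compatibility condition of Lemma \ref{l:2}. Since relative $n$-isoclinism is an equivalence relation, the two links then assemble into the whole chain.

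The two structural facts driving everything come from the hypothesis $G = HZ_n(G)$. First, $H \cap Z_n(G) \le Z_n(H)$: an element $h \in H \cap Z_n(G)$ satisfies $[h,g_1,\dots,g_n]=1$ for all $g_i \in G$, in particular for all $g_i \in H$, so $h \in Z_n(H)$. Second, and crucially, all the relevant commutator subgroups collapse to a single one:
\[
\gamma_{n+1}(H) = [_nH,H] = [_nH,G] = [_nG,G] = \gamma_{n+1}(G).
\]
This rests on the coset-independence of $(n+1)$-fold commutators modulo $Z_n(G)$, namely that $[x_1,\dots,x_{n+1}]$ is unchanged when any $x_j$ is multiplied by an element of $Z_n(G)$; this is exactly the well-posedness of the maps $\gamma(n,-,-)$ noted after Definition \ref{d:1}, and follows from the standard inclusions $[\gamma_i(G),Z_m(G)] \le Z_{m-i}(G)$ (with $Z_k = 1$ for $k \le 0$) for the upper and lower central series. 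Granting this, every generator $[g_1,\dots,g_n,g_{n+1}]$ of $[_nG,G]$ can be rewritten, via $g_i = h_i z_i$ with $h_i \in H$ and $z_i \in Z_n(G)$, as $[h_1,\dots,h_{n+1}] \in \gamma_{n+1}(H)$, while the reverse inclusions $\gamma_{n+1}(H) = [_nH,H] \le [_nH,G] \le [_nG,G]$ are immediate.

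With these facts in hand the two isoclinisms become formal. For $(H,G) \ _{\widetilde{n}} \ (G,G)$ take $N_1 = N_2 = Z_n(G)$, let $\alpha$ be the identity on $G/Z_n(G)$ and $\beta$ the identity on the common subgroup $[_nH,G]=[_nG,G]$; the condition $\alpha(H/(N_1\cap H)) = G/N_2$ is just $HZ_n(G)/Z_n(G) = G/Z_n(G)$, and the compatibility $\beta([h_1,\dots,h_n,g]) = [k_1,\dots,k_n,g_2]$ holds because $k_i \in h_iZ_n(G)$ and $g_2 \in gZ_n(G)$ leave the commutator unchanged. For $(H,H) \ _{\widetilde{n}} \ (H,G)$ take $N_1 = H\cap Z_n(G)$ (contained in $Z_n(H)$ by the first fact) and $N_2 = Z_n(G)$, let $\alpha\colon H/(H\cap Z_n(G)) \to G/Z_n(G)$ be the natural isomorphism induced by the inclusion $H \hookrightarrow G$ (an isomorphism precisely because $G = HZ_n(G)$) and $\beta$ the identity on $[_nH,H]=[_nH,G]$; the remaining conditions are checked in the same way.

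The main obstacle is the displayed chain of equalities, that is, showing that replacing $G$ by the possibly much smaller $H$ does not shrink $[_nG,G]$. Everything reduces to the coset-independence of higher commutators modulo $Z_n(G)$; once the position-by-position commutator collection is carried out (the inner positions being the delicate ones, since a factor from $Z_n(G)$ in position $j$ must be absorbed by exactly the $n+1-j$ remaining commutations), the rest is a routine assembly of the data required by Lemma \ref{l:2}.
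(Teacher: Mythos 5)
Your proof is correct and follows essentially the same route as the paper's: both verify the links via Lemma \ref{l:2}, taking the isomorphism $\alpha$ induced by the inclusion $H\hookrightarrow G$ (using $H/(H\cap Z_n(G))\cong HZ_n(G)/Z_n(G)=G/Z_n(G)$) and the identity map for $\beta$ on the collapsed commutator subgroup $[_nH,H]=[_nH,G]=[_nG,G]$. You simply supply more detail than the paper, which asserts $Z_n(H)=Z_n(G)\cap H$ and $[_nH,HZ_n(G)]=\gamma_{n+1}(H)$ without proof and dispatches the link $(H,G) \ _{\widetilde{n}} \ (G,G)$ as ``a similar argument,'' whereas you prove the commutator collapse via coset-independence modulo $Z_n(G)$, note that only the inclusion $H\cap Z_n(G)\leq Z_n(H)$ (not equality) is needed for Lemma \ref{l:2}, and write out both links explicitly.
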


\begin{proof} We want to show that $(H,H) \ _{\widetilde{n}} \ (H,G)$. Let
$G=HZ_n(G)$. We may easily see that $Z_n(H)=Z_n(G)\cap H$. Thus
$H/Z_n(H)= H/(Z_n(G)\cap H)$ is isomorphic to
$HZ_n(G)/Z_n(G)=G/Z_n(G)$.

Therefore $\alpha:H/Z_n(H)\rightarrow G/Z_n(G)$ is an isomorphism
which is induced by the inclusion $i:H\rightarrow G.$ Furthermore,
we can consider $\alpha$ as isomorphism from $H/Z_n(H)$ to
$H/(Z_n(G)\cap H)$.

On another hand, $[_nH,G]=[_nH,HZ_n(G)]=\gamma_{n+1}(H)$. By Lemma
\ref{l:2}, the pair $(\alpha,1_{\gamma_{n+1}(H)})$ allows us to
state
 that $(H,H) \ _{\widetilde{n}} \ (H,G)$.

The remaining cases $(H,H) \ _{\widetilde{n}} \ (G,G)$ and $(H,H) \
_{\widetilde{n}} \ (H,G)$ follow by a similar argument. \end{proof}

\begin{proof}[Proof of Theorem \ref{t:2}] It follows from Theorem
\ref{t:1} and Lemma \ref{l:3}. \end{proof}

We end with some consequences of the main results. We recall the
 following lemma, which is used in many proofs of \cite{elr, gal, gr, gus}.

\begin{lemma}[\textrm{See} \cite{er}, \textrm{Lemma 3.1}] \label{l:4}Let $k$ be a positive integer. If $H$ is a closed subgroup of a
compact group $G$, then
\[\mu_G(H)=\left\{\begin{array}{lcl} \frac{1}{k},&\,\,& |G:H|=k\\
0,&& |G:H|=\infty.\end{array}\right.\]\end{lemma}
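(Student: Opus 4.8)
The plan is to exploit the left-invariance of the normalized Haar measure $\mu_G$ together with the coset decomposition of $G$ modulo $H$. First I would record the measurability input: since $H$ is closed in the compact group $G$, it is a Borel set and hence $\mu_G$-measurable; moreover every left coset $gH$ is the image of $H$ under the homeomorphism $x\mapsto gx$, so it too is measurable and, by left-invariance of $\mu_G$, satisfies $\mu_G(gH)=\mu_G(H)$. This is the only place where the hypotheses ``$H$ closed'' and ``$G$ compact'' are genuinely used, and it is what makes the translation identity available.

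For the finite-index case, suppose $|G:H|=k$ and write $G=\bigsqcup_{i=1}^{k} g_iH$ as a disjoint union of its $k$ distinct left cosets. By finite additivity and left-invariance,
\[1=\mu_G(G)=\sum_{i=1}^{k}\mu_G(g_iH)=k\,\mu_G(H),\]
whence $\mu_G(H)=\f{1}{k}$.

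For the infinite-index case, I would show that $\mu_G(H)$ cannot be positive. If $|G:H|=\infty$, then for every positive integer $N$ one may select $N$ pairwise distinct, hence pairwise disjoint, left cosets $g_1H,\ldots,g_NH$. Their union lies in $G$, so
\[N\,\mu_G(H)=\mu_G\left(\bigsqcup_{i=1}^{N} g_iH\right)\le \mu_G(G)=1,\]
forcing $\mu_G(H)\le \f{1}{N}$. Since $N$ is arbitrary, $\mu_G(H)=0$.

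The argument is short, and once the measurability and invariance of cosets are secured, both cases reduce to elementary finite additivity; notably the infinite case needs only finitely many disjoint cosets for each bound, so no countable additivity or structure theory of $H$ is required. I expect the only point demanding care to be precisely this measurability step, namely verifying that each translate $g_iH$ is measurable with $\mu_G(g_iH)=\mu_G(H)$, which is exactly the content of the uniqueness and left-invariance of the Haar measure on a compact group.
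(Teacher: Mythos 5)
Your proof is correct. Note that the paper itself does not prove this lemma at all: it is quoted verbatim from \cite{er} (Lemma 3.1 there) and used as a black box, so there is no in-paper argument to compare against. Your coset-decomposition argument is the standard one and is essentially what the cited source does: closedness of $H$ gives Borel measurability, left-invariance gives $\mu_G(g_iH)=\mu_G(H)$, finite additivity over the $k$ disjoint cosets gives $\mu_G(H)=\frac{1}{k}$ in the finite-index case, and in the infinite-index case the bound $N\mu_G(H)\le \mu_G(G)=1$ for arbitrary $N$ forces $\mu_G(H)=0$. The one small refinement to your commentary: compactness is used not only for measurability of translates but also to guarantee that $\mu_G$ is finite and normalized, i.e.\ $\mu_G(G)=1$, which is exactly what makes both of your displayed inequalities bite; in a noncompact group the same coset argument would say nothing.
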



\begin{proposition} \label{l:5} Let $G$ be a compact group and $G/Z(G)$ be a $p$-group of order
$p^k$, where $p$ is a prime and $k\geq 2$ is an integer. Then
$d(G)\leq \frac{p^k + p-1}{p^{k+1}}$ and the equality holds if $G$
is isoclinic to an extra-special $p$-group of order $p^{k+1}$.
\end{proposition}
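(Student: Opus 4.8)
The plan is to reduce $d(G)$ to an integral of centralizer measures by means of Lemma \ref{l:1}, and then to exploit the finiteness of $G/Z(G)$ to control each such measure. Taking $n=1$ and $H=G$ in Lemma \ref{l:1} gives
\[d(G)=\int_G \mu_G(C_G(x))\, d\mu_G(x),\]
which I would split as an integral over $Z(G)$ plus one over $G\setminus Z(G)$. For $x\in Z(G)$ one has $C_G(x)=G$, so $\mu_G(C_G(x))=1$; since $|G:Z(G)|=|G/Z(G)|=p^{k}$, Lemma \ref{l:4} yields $\mu_G(Z(G))=1/p^{k}$, so the central part contributes exactly $1/p^{k}$.

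The decisive structural step concerns $x\notin Z(G)$. Here $C_G(x)$ is a proper closed subgroup containing $Z(G)$, so by the correspondence theorem $|G:C_G(x)|=|G/Z(G):C_G(x)/Z(G)|$ divides $p^{k}$ and exceeds $1$, whence $|G:C_G(x)|\geq p$. In particular this index is always finite, so Lemma \ref{l:4} applies and gives $\mu_G(C_G(x))\leq 1/p$ for every such $x$. Integrating over $G\setminus Z(G)$, which has measure $1-1/p^{k}$, bounds the non-central part by $\tfrac{1}{p}(1-\tfrac{1}{p^{k}})=\tfrac{p^{k}-1}{p^{k+1}}$. Adding the two contributions yields
\[d(G)\leq \frac{1}{p^{k}}+\frac{p^{k}-1}{p^{k+1}}=\frac{p^{k}+p-1}{p^{k+1}},\]
which is the asserted bound.

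For the equality clause I would invoke the invariance of $d$ under isoclinism, which is precisely Theorem \ref{t:1} specialized to $H_1=G_1=G$, $H_2=G_2=E$ and $n=1$ (ordinary isoclinism being this case, as noted after Definition \ref{d:1}); thus $d(G)=d(E)$ whenever $G$ is isoclinic to $E$. It then suffices to compute $d(E)$ for an extra-special $p$-group $E$ of order $p^{k+1}$. The key point is that $|Z(E)|=p$ and that for each non-central $x$ the homomorphism $y\mapsto [x,y]$ maps $E$ onto $Z(E)$, so its kernel $C_E(x)$ has index exactly $p$. Hence every inequality above is an equality for $E$, and a direct count gives $d(E)=\tfrac{p^{k}+p-1}{p^{k+1}}$, so the same value is forced for $G$.

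I expect the inequality itself to be essentially a finite computation once Lemmas \ref{l:1} and \ref{l:4} are in place, precisely because every relevant centralizer has finite $p$-power index over $Z(G)$ and thus known Haar measure. The genuine subtleties lie on the equality side: one must justify that $d$ is an isoclinism invariant, for which I would lean directly on Theorem \ref{t:1} rather than reprove it, and one must record explicitly the centralizer-index property of extra-special groups, since it is exactly what pins down when the bound is attained.
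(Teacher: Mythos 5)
Your proposal is correct and follows essentially the same route as the paper: the same splitting of $d(G)=\int_G\mu_G(C_G(x))\,d\mu_G(x)$ into the central and non-central parts with Lemma \ref{l:4} supplying the bounds $\mu_G(Z(G))=1/p^k$ and $\mu_G(C_G(x))\leq 1/p$, and the same appeal to Theorem \ref{t:1} plus the commutator epimorphism $y\mapsto[x,y]$ (onto $G'=Z(E)$ for an extra-special group) to settle the equality case. The only difference is that you spell out details the paper leaves implicit, such as the correspondence-theorem argument for $|G:C_G(x)|\geq p$.
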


\begin{proof} By Lemma \ref{l:4} we have $\mu_G(Z(G))=\f{1}{p^k}$ and
$\mu_G(C_G(x))\leq \f1p$ for all $x\notin Z(G)$. Therefore
\[\begin{array}{lcl}
d(G)&=&\int_G\mu_G(C_G(x))d\mu_G(x)=\int_{Z(G)}\mu_G(C_G(x))+\int_{G-Z(G)}\mu_G(C_G(x))\vspace{.3cm}\\
&\leq&\f1p+(\f{p-1}{p})\mu_G(Z(G))=\f1p+(\f{p-1}{p})\f1{p^k}=\f{p^k+p-1}{p^{k+1}}.
\end{array}\]

By Theorem \ref{t:1}, there is no loss of generality in assuming
that $G$ is an extra-special $p$-group of order $p^{k+1}$. For all
$x\not\in Z(G)$ we may consider the epimorphism
 $\varphi_x:G\rightarrow G'$ defined by $\varphi_x(y)=[x,y]$ in which the kernel is $C_G(x)$. Therefore $\mu_G(C_G(x))=\f1p$ and
 so the above inequality becomes $d(G)=\f{p^k + p-1}{p^{k+1}}$.
 \end{proof}

An easy consequence has been shown below.

\begin{proposition}\label{p:1}If $G$ is a compact group in which $Z(G)$ has infinite index in
$G$, then $d(G)\leq
 \frac{1}{2}$. In particular, if $G$ is isoclinic to an extra-special $p$-group for some prime $p$, then $d(G)=
 \frac{1}{p}$.\end{proposition}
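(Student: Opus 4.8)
The plan is to derive both assertions from the single integral representation $d(G)=\int_G \mu_G(C_G(x))\,d\mu_G(x)$ that already drove the proof of Proposition~\ref{l:5}, splitting the domain of integration as the disjoint union of $Z(G)$ and $G\setminus Z(G)$ in each case.

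For the inequality, the decisive observation is that $Z(G)$ is a closed subgroup of infinite index, so Lemma~\ref{l:4} gives $\mu_G(Z(G))=0$ and hence $\mu_G(G\setminus Z(G))=1$. The part of the integral over $Z(G)$, where the integrand is identically $1$, therefore contributes nothing. For any $x\notin Z(G)$ the centralizer $C_G(x)$ is a proper closed subgroup, so $|G:C_G(x)|\ge 2$; Lemma~\ref{l:4} then yields $\mu_G(C_G(x))\le \tfrac12$, the value being $0$ when the index is infinite. Bounding the integrand by $\tfrac12$ over $G\setminus Z(G)$ gives
\[
d(G)=\int_{G\setminus Z(G)}\mu_G(C_G(x))\,d\mu_G(x)\le \frac12\,\mu_G(G\setminus Z(G))=\frac12 .
\]

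For the equality I would extract from the isoclinism with an extra-special $p$-group the algebraic feature that really matters: that $G'$ is cyclic of order $p$ with $G'\le Z(G)$, so that the commutator map behaves exactly as in the extra-special case. Since $G'\le Z(G)$, for each $x\notin Z(G)$ the assignment $\varphi_x:G\to G'$, $\varphi_x(y)=[x,y]$, is a homomorphism whose kernel is precisely $C_G(x)$; because $x$ is non-central the image is a nontrivial subgroup of the order-$p$ group $G'$, so $\varphi_x$ is onto and $|G:C_G(x)|=p$. Consequently $\mu_G(C_G(x))=\tfrac1p$ for every non-central $x$, which is the same centralizer computation carried out at the end of the proof of Proposition~\ref{l:5}.

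Feeding this constant value into the split integral finishes the argument: the central part again vanishes because $\mu_G(Z(G))=0$, and
\[
d(G)=\int_{G\setminus Z(G)}\frac1p\,d\mu_G(x)=\frac1p\,\mu_G(G\setminus Z(G))=\frac1p .
\]
The step I expect to be the real obstacle is the third one, namely transferring the extra-special commutator structure across the isoclinism so as to guarantee that $\varphi_x$ maps onto a group of order $p$ for every non-central $x$, while simultaneously keeping this consistent with $Z(G)$ having infinite index. It is exactly the interplay of the two hypotheses—the extra-special structure forcing $\mu_G(C_G(x))=1/p$ off the centre, together with the infinite-index hypothesis forcing the central contribution to vanish—that pins $d(G)$ down to $1/p$ rather than to the finite-group value $\frac{p^k+p-1}{p^{k+1}}$ of Proposition~\ref{l:5}.
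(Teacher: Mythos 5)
Your inequality half is exactly the paper's argument: both proofs split $d(G)=\int_G\mu_G(C_G(x))\,d\mu_G(x)$ over $Z(G)$ and its complement and apply Lemma \ref{l:4} twice, getting $\mu_G(Z(G))=0$ from the infinite-index hypothesis and $\mu_G(C_G(x))\le\frac{1}{2}$ off the centre. For the equality half you genuinely diverge from the paper. The paper appeals to Theorem \ref{t:1} to assume ``without loss of generality'' that $G$ itself is extra-special and then repeats the epimorphism computation from the end of Proposition \ref{l:5}; you instead keep the compact group $G$ fixed and transport only the abstract structure ($|G'|=p$ and $G'\le Z(G)$) across the isoclinism, running the $\varphi_x$ argument inside $G$. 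Your route buys something real: here any extra-special group $E$ isoclinic to $G$ must be infinite (its central quotient is isomorphic to $G/Z(G)$), and Theorem \ref{t:1} is only applicable if $E$ can be realized as a compact group --- a point the paper's ``argue as in Proposition \ref{l:5}'' quietly glosses over. Your argument never leaves $G$, so that issue evaporates.

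Moreover, the step you flag as ``the real obstacle'' is not an obstacle at all; it follows at once from Definition \ref{d:1}, so your proof closes completely. If $(\alpha,\beta)$ is an isoclinism from $G$ to an extra-special $p$-group $E$, then $\beta:G'\rightarrow E'$ is an isomorphism, whence $|G'|=|E'|=p$; and since $E'=Z(E)$, the quotient $E/Z(E)$ is abelian, so the isomorphic group $G/Z(G)$ is abelian, which says precisely that $G'\le Z(G)$. (Note that this second point uses only that $\alpha$ is an isomorphism of central quotients --- the commutative diagram is not even needed.) With these two facts in hand, your computation $\mu_G(C_G(x))=\frac{1}{p}$ for every $x\notin Z(G)$, and hence $d(G)=\frac{1}{p}$ after the central part of the integral vanishes, is complete and correct.
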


\begin{proof} By Lemma \ref{l:4} we have $\mu_G(Z(G))=0$ and
$\mu_G(C_G(x))\leq \f12$ for all $x\not\in Z(G)$. From this, we have
\[\begin{array}{lcl}
d(G)=\int_G\mu_G(C_G(x))d\mu_G(x)\leq\frac{1}{2}+\frac{1}{2}\mu_G(Z(G))=\frac{1}{2}.
\end{array}\]

Following the same argument,  if $G/Z(G)$ is an infinite $p$-group, then  $d(G)\leq \frac{1}{p}$. In particular,
if $G$ is isoclinic to an extra-special $p$-group, then we may argue as in the last part of the proof of
Proposition \ref{l:5}, concluding that $\mu_G(C_G(x))=\f1p$ for every $x\not \in Z(G)$. Therefore the equality
$d(G)= \frac{1}{p}$ holds and the result follows. \end{proof}

\end{document}